\newcommand{\C}{ \mathbb{C}}
\newcommand{\Z}{ \mathbb{Z}}
\renewcommand{\pmod}[1]{\,(\operatorname{mod}#1)}
\theoremstyle{plain}
\newtheorem*{Thm}{Theorem}
\begin{document}

    \title{Quotients of Gaussian primes}

    \author{Stephan Ramon Garcia}
    \address{   Department of Mathematics\\
            Pomona College\\
            Claremont, California\\
            91711 \\ USA}
    \email{Stephan.Garcia@pomona.edu}
    \urladdr{\url{http://pages.pomona.edu/~sg064747}}
    
    \begin{abstract}
    It has been observed many times, both in the \textsc{Monthly} and elsewhere, 
    that the set of all quotients of prime numbers is dense in the positive real numbers. In this short note we answer the related question: 
    ``Is the set of all quotients of Gaussian primes dense in the complex plane?''
    \end{abstract}

\maketitle
	  
Quotient sets $\{ s/t :s,t \in \mathbb{S}\}$ corresponding to subsets $\mathbb{S}$ of the natural numbers
have been intensely studied in the \textsc{Monthly} over the years \cite{Bukor,Hedman,Hobby,Nowicki,Starni, QSDE}. 
Moreover, it has been observed many times in the \textsc{Monthly} and elsewhere that the set of all quotients of prime numbers is dense in the positive reals (e.g., \cite[Ex.~218]{DM}, \cite[Ex.~4.19]{FR}, 
\cite[Thm.~4]{Hobby}, \cite[Cor.~5]{QSDE}, \cite[Ex.~7, p.~107]{Pollack}, \cite[Thm.~4]{Ribenboim}, \cite[Cor.~2]{Starni}).  

In this short note we answer the related question: \emph{``Is the set of all quotients of Gaussian primes
dense in the complex plane?''}  The author became convinced of the nontriviality of this problem
after consulting several respected number theorists who each admitted not seeing a simple solution.  

In the following, we refer to the traditional primes $2,3,5,7,\ldots$ as \emph{rational primes}, remarking that a rational prime $p$ is a \emph{Gaussian prime} (i.e., a prime in the ring $\Z[i] := \{a + bi : a,b \in \Z\}$ of \emph{Gaussian integers}) if and only if $p \equiv 3 \pmod{4}$.  In general, a nonzero Gaussian integer is prime if and only if it is of the form $\pm p$ or $\pm p i$ where $p$ is a rational prime congruent to $3\pmod{4}$ or if it is of the form $a+bi$ where $a^2 +b^2$ is a
rational prime (see Figure \ref{FigureMain}).
We refer the reader to \cite{HardyWright} for complete details.  
\begin{figure}
	\begin{center}
		\includegraphics[width=2.25in]{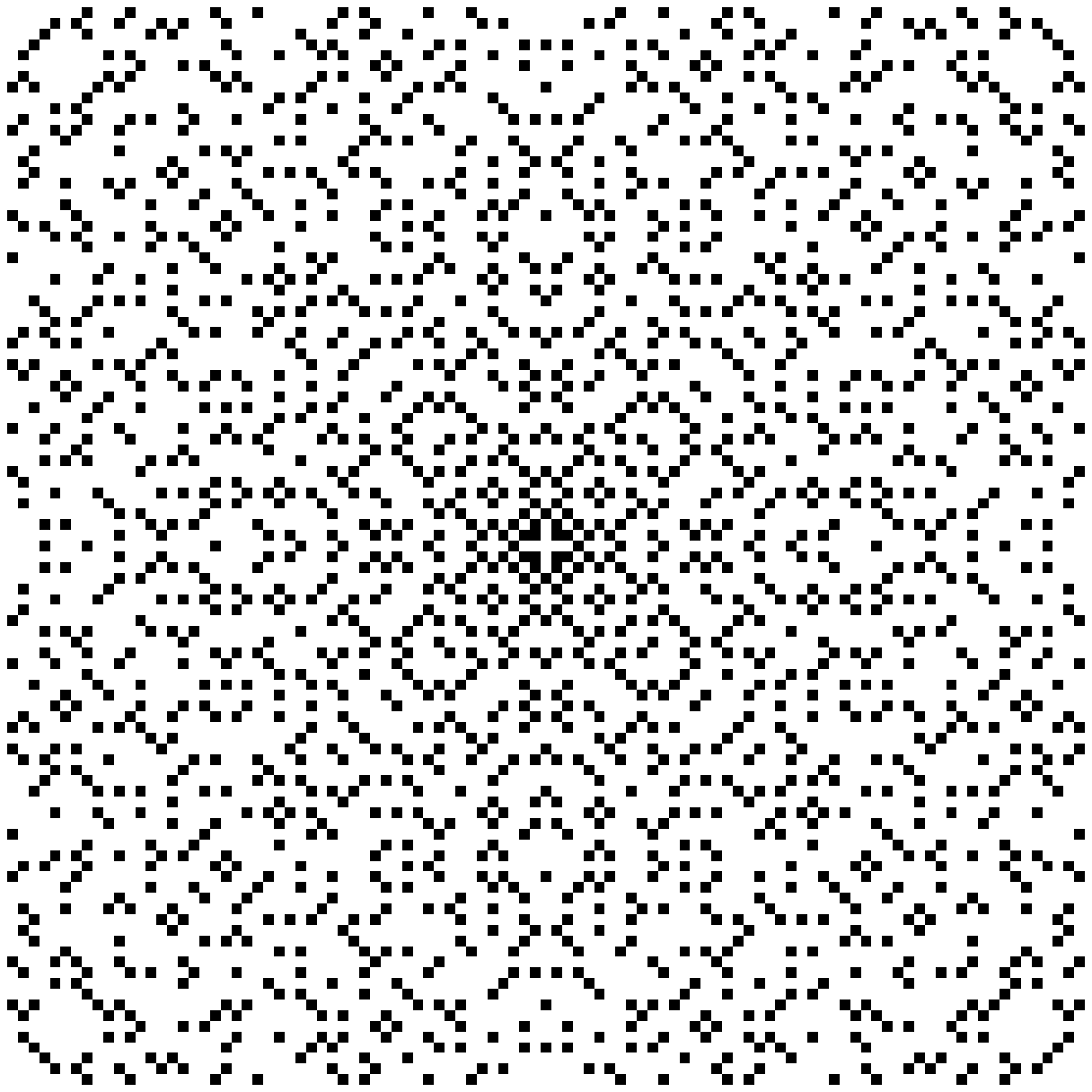}\qquad
		\includegraphics[width=2.25in]{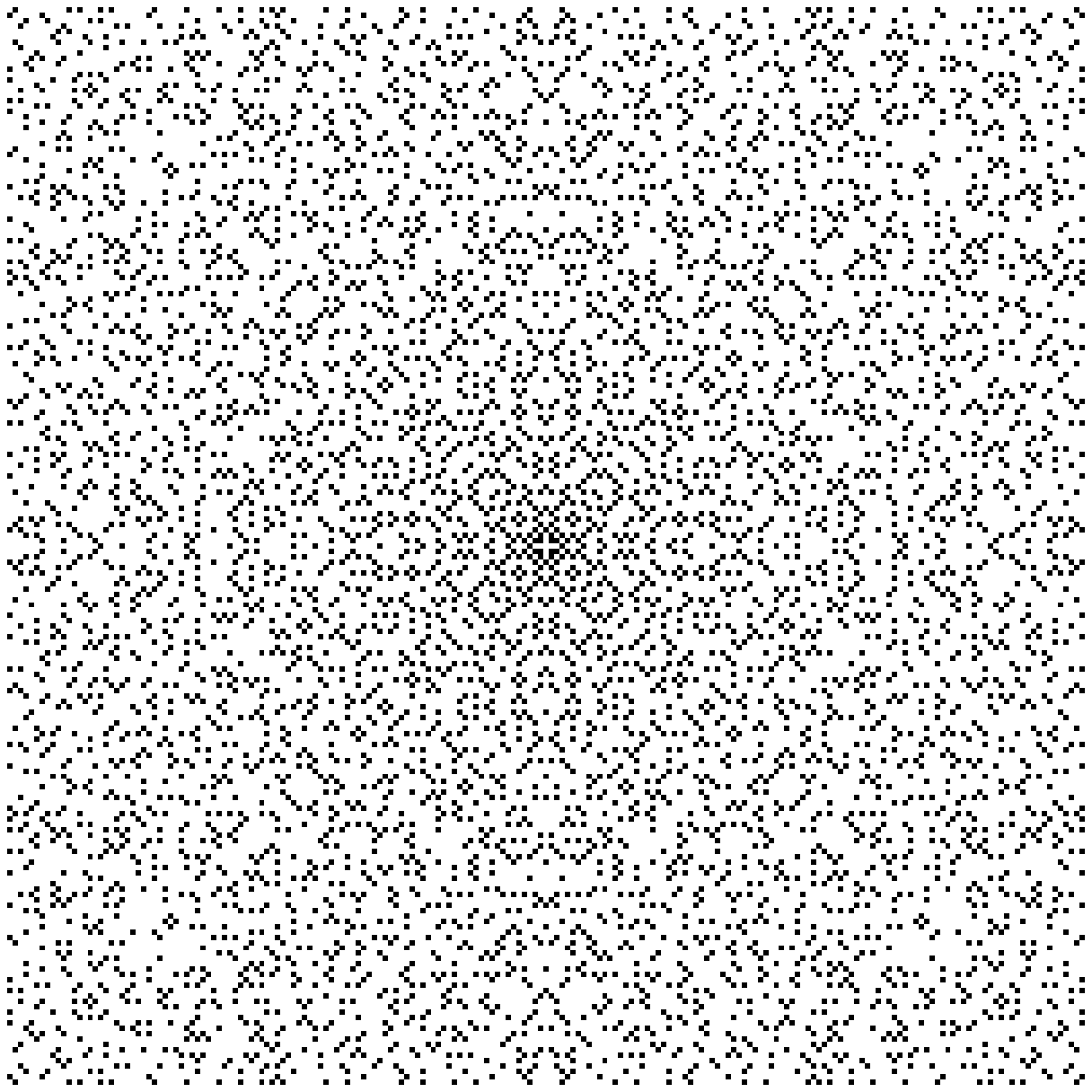}
	\end{center}
	\caption{Gaussian primes $a+bi$ satisfying $|a|,|b| \leq 50$ and $|a|,|b|\leq 100$, respectively.}
	\label{FigureMain}
\end{figure}	

\begin{Thm}
	The set of quotients of Gaussian primes is dense in the complex plane.
\end{Thm}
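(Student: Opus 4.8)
The plan is to separate the approximation of a target point into an \emph{angular} task and a \emph{radial} task. Fix $z\in\C$ and $\epsilon>0$. If $z=0$, observe that $(1+i)/q \to 0$ as $q$ runs through the rational primes $q\equiv 3\pmod{4}$, each of which --- like $1+i$ --- is a Gaussian prime; hence $0$ lies in the closure of the set of quotients. So suppose $z\neq 0$, write $z=re^{i\theta}$ with $r=|z|>0$, and (shrinking $\epsilon$ if necessary, which only strengthens what we prove) assume $0<\epsilon<r$. It then suffices to exhibit a Gaussian prime $\pi$ and a rational prime $q\equiv 3\pmod{4}$ with $|\pi/q - z|<\epsilon$.

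For the angular task I would invoke the one substantial ingredient: the classical theorem of Hecke (the source of the theory of Hecke characters) that the arguments of the Gaussian primes are equidistributed, so that every sector of positive angular width contains infinitely many Gaussian primes. Applying this to the sector $\bigl\{\,w:|\arg w-\theta|<\epsilon/(4r)\,\bigr\}$, I would pick a Gaussian prime $\pi$ in it whose modulus $\rho:=|\pi|$ exceeds a threshold to be named momentarily, and write $\pi=\rho\,e^{i\phi}$, so that $|\phi-\theta|<\epsilon/(4r)$.

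For the radial task I would then choose the denominator $q$ to tune the modulus. The interval
\[
I_\rho \;:=\; \Bigl(\,\frac{\rho}{r+\tfrac{\epsilon}{2}}\,,\ \frac{\rho}{r-\tfrac{\epsilon}{2}}\,\Bigr)
\]
brackets $\rho/r$ and has length $\asymp \epsilon\rho/r^{2}$; by the prime number theorem for arithmetic progressions, once $\rho$ is large enough the interval $I_\rho$ contains a prime $q$ with $q\equiv 3\pmod{4}$ (this fixes the threshold). For this $q$ we have $|\rho/q-r|<\epsilon/2$ and $\rho/q<r+\epsilon/2<2r$, hence
\[
\Bigl|\frac{\pi}{q}-z\Bigr| \;\le\; \frac{\rho}{q}\,\bigl|e^{i\phi}-e^{i\theta}\bigr| \;+\; \Bigl|\frac{\rho}{q}-r\Bigr| \;\le\; 2r\cdot\frac{\epsilon}{4r}+\frac{\epsilon}{2} \;=\; \epsilon,
\]
using $|e^{i\phi}-e^{i\theta}|\le|\phi-\theta|$. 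Since $z$ and $\epsilon$ were arbitrary, the quotients of Gaussian primes are dense in $\C$.

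I expect the only genuine obstacle to be the angular input. The reduction above is elementary, and it requires far less than full equidistribution: it is enough that the arguments of the Gaussian primes be \emph{dense} and that each sector contain Gaussian primes of arbitrarily large modulus. Even so, this qualitative statement seems to demand real analytic number theory --- a sieve produces only almost-primes in a thin sector, not primes --- so the heart of the matter is to secure (or correctly cite) the distribution of Gaussian primes by argument; everything else is the triangle inequality together with Dirichlet's theorem / the prime number theorem in arithmetic progressions.
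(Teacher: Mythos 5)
Your argument is correct and is essentially the proof in the paper: the same decomposition into an angular step (Gaussian primes of arbitrarily large modulus in any sector, which the paper gets from Kubilyus's quantitative form of Hecke's equidistribution theorem) and a radial step (a prime $q\equiv 3\pmod{4}$ in a dilated interval via the prime number theorem for arithmetic progressions). The only differences are cosmetic --- you work with $\epsilon$-balls instead of annular sectors and treat $z=0$ separately.
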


\begin{proof}
	It suffices to show that each region of the form
	\begin{equation}\label{eq-Region}
		 \{ z \in \C : \alpha < \arg z < \beta, \,\, r < |z| < R \}.
	\end{equation}
	contains a quotient of Gaussian primes.  
				
	We first claim that if $0< a < b$, then for sufficiently large real $x$, the open interval
	$(xa,xb)$ contains a rational prime congruent to $3 \pmod{4}$.
	Let $\pi_3(x)$ denote the number of rational primes congruent to $3 \pmod{4}$ which are $\leq x$.
	By the Prime Number Theorem for Arithmetic Progressions \cite[Thm.~4.7.4]{HardyWright},
	\begin{equation*}
		\lim_{x\to\infty} \frac{ \pi_3(x) }{ x / \log x} = \frac{1}{2},
	\end{equation*}
	whence
	\begin{align*}
		\lim_{x\to\infty} [\pi_3(xb) - \pi_3(xa)]
		&= \lim_{x\to\infty}\pi_3(xb) \left[ 1 - \frac{ \pi_3(xa) }{ \pi_3(xb) } \right] \\
		&= \lim_{x\to\infty}\pi_3(xb) \left[ 1 - \frac{ xa  \log xb }{x b \log xa } \right] \\
		&= \left(1 - \frac{a}{b}\right) \lim_{x\to\infty} \pi_3(xb)\\
		&= \infty,
	\end{align*}
	which establishes the claim.

	Next observe that
	the sector $\alpha < \arg z < \beta$ contains Gaussian primes of arbitrarily large magnitude.
	This follows from an old result of I.~Kubilyus (illustrated in Figure \ref{Figure}) which states that 
	the number of Gaussian primes $\gamma$ satisfying $0 \leq \alpha \leq \arg \gamma \leq \beta \leq 2\pi$ 
	and $|\gamma|^2 \leq u$ is
	\begin{equation}\label{eq-K}
		\frac{2}{\pi}(\beta - \alpha) \int_2^u \frac{dx}{\log x} + O\left(u \exp(-b\sqrt{\log u}) \right)
	\end{equation}
	where $b>0$ is an absolute constant \cite{Kubilyus} (see also \cite[Thms.~2,3]{HarmanLewis}).  
	
\begin{figure}[htb!]
	\begin{subfigure}{0.4\textwidth}
		\centering
		\begin{equation*}\small
			\begin{array}{|r|r|r|}
				\hline
				\rho & N & K \\
				\hline
				100 & 50 & 53 \\
				500 & 946 & 940 \\
				1,\!000 & 3,\!327 & 3,\!346 \\
				5,\!000 & 66,\!712 & 66,\!651 \\
				10,\!000 & 245,\!085 & 245,\!200 \\
				25,\!000 & 1,\!384,\!746 & 1,\!385,\!602 \\
				50,\!000 & 5,\!168,\!740 & 5,\!167,\!941 \\
				\hline
			\end{array}
		\end{equation*}
		\caption{$\frac{\pi}{24} \leq \arg z \leq \frac{2\pi}{47}$}
	\end{subfigure}
	\qquad
		\begin{subfigure}{0.4\textwidth}
		\centering
		\begin{equation*}\small
			\begin{array}{|r|r|r|}
				\hline
				\rho & N & K \\
				\hline
				1,\!000 & 0 & 5 \\
				5,\!000 & 0 & 100\\
				10,\!000 & 369 & 367 \\
				50,\!000 & 7,\!823 & 7,\!732 \\
				100,\!000 & 28,\!964 & 28,\!971 \\
				250,\!000 & 167,\!197 & 167,\!099 \\
				500,\!000 & 632,\!781& 631,\!552\\
				\hline
			\end{array}
		\end{equation*}
		\caption{$\frac{\pi}{31415} \leq \arg z \leq \frac{2\pi}{31415}$}
	\end{subfigure}

	\caption{The number $N$ of Gaussian primes in the specified sector with $|z|<\rho$, along with
	the corresponding estimate $K$ (rounded to the nearest whole number) provided by \eqref{eq-K}}
	\label{Figure}
\end{figure}

	Putting this all together, we conclude that there exists a Gaussian prime $\gamma$ in the sector 
	$\alpha < \arg z < \beta$ whose magnitude is large enough to ensure that
	\begin{equation*}
		\pi_3\left( \frac{|\gamma|}{r}\right) - \pi_3\left( \frac{|\gamma|}{R} \right) \geq 2.
	\end{equation*}
	This yields a rational prime $q \equiv 3 \pmod{4}$ such that 
	\begin{equation*}
		\frac{|\gamma|}{R} < q < \frac{|\gamma|}{r}.
	\end{equation*}
	Since $q$ is real and positive, it follows that
	$r < |\frac{\gamma}{q}| < R$ and $\alpha < \arg \frac{\gamma}{q} < \beta$ so that
	$\gamma/q$ is a quotient of Gaussian primes which belongs to the desired region \eqref{eq-Region}.
\end{proof}

\noindent\textbf{Acknowledgments}:  We thank the anonymous referees for several helpful suggestions.
This work was partially supported by National Science Foundation Grant DMS-1001614.

\bibliographystyle{monthly}
\bibliography{QGP}

\def\polhk#1{\setbox0=\hbox{#1}{\ooalign{\hidewidth
  \lower1.5ex\hbox{`}\hidewidth\crcr\unhbox0}}}
\providecommand{\bysame}{\leavevmode\hbox to3em{\hrulefill}\thinspace}
\providecommand{\MR}{\relax\ifhmode\unskip\space\fi MR }
% \MRhref is called by the amsart/book/proc definition of \MR.
\providecommand{\MRhref}[2]{%
  \href{http://www.ams.org/mathscinet-getitem?mr=#1}{#2}
}
\providecommand{\href}[2]{#2}
\begin{thebibliography}{10}

\bibitem{Bukor}
J.~Bukor and J.~T. T{\'o}th, On accumulation points of ratio sets of positive
  integers, \emph{Amer. Math. Monthly} \textbf{103} (1996) 502--504.

\bibitem{DM}
J.-M. De~Koninck and A.~Mercier, \emph{1001 Problems in Classical Number
  Theory}, American Mathematical Society, Providence, RI, 2007.

\bibitem{FR}
B.~Fine and G.~Rosenberger, \emph{Number Theory: An Introduction via the
  Distribution of Primes}, Birkh\"auser, Boston, 2007.

\bibitem{QSDE}
S.~R. Garcia, V.~Selhorst-Jones, D.~E. Poore, and N.~Simon, Quotient sets and
  {D}iophantine equations, \emph{Amer. Math. Monthly} \textbf{118} (2011)
  704--711.

\bibitem{HardyWright}
G.~H. Hardy and E.~M. Wright, \emph{An introduction to the theory of numbers},
  sixth ed., Oxford University Press, Oxford, 2008, Revised by D. R.
  Heath-Brown and J. H. Silverman, With a foreword by Andrew Wiles.

\bibitem{HarmanLewis}
G.~Harman and P.~Lewis, Gaussian primes in narrow sectors, \emph{Mathematika}
  \textbf{48} (2001) 119--135 (2003).

\bibitem{Hedman}
S.~Hedman and D.~Rose, Light subsets of {$\Bbb N$} with dense quotient sets,
  \emph{Amer. Math. Monthly} \textbf{116} (2009) 635--641.

\bibitem{Hobby}
D.~Hobby and D.~M. Silberger, Quotients of primes, \emph{Amer. Math. Monthly}
  \textbf{100} (1993) 50--52.

\bibitem{Kubilyus}
I.~Kubilyus, The distribution of {G}aussian primes in sectors and contours,
  \emph{Leningrad. Gos. Univ. U\v c. Zap. Ser. Mat. Nauk} \textbf{137(19)}
  (1950) 40--52.

\bibitem{Nowicki}
A.~Nowicki, Editor's endnotes, \emph{Amer. Math. Monthly} \textbf{117} (2010)
  755--756.

\bibitem{Pollack}
P.~Pollack, \emph{Not Always Buried Deep: A Second Course in Elementary Number
  Theory}, American Mathematical Society, Providence, RI, 2009.

\bibitem{Ribenboim}
P.~Ribenboim, \emph{The Book of Prime Number Records}, 2nd ed.,
  Springer-Verlag, New York, 1989.

\bibitem{Starni}
P.~Starni, Answers to two questions concerning quotients of primes, \emph{Amer.
  Math. Monthly} \textbf{102} (1995) 347--349.

\end{thebibliography}

\end{document}